\crefname{equation}{Eq.}{Eqs.}
\theoremstyle{plain}
\newtheorem{thm}{Theorem}[section]
\crefname{thm}{Theorem}{Theorems}
\newtheorem{prop}{Proposition}[section]
\newtheorem{lem}{Lemma}[section]
\newtheorem{prob}{Problem}[section]
\theoremstyle{definition}
\newtheorem{df}{Definition}[section]
\newtheorem{rem}{Remark}[section]
\newcommand{\C}{\mathcal{C}}
\newcommand{\wt}[1]{{\rm wt}(#1)}
\DeclareMathOperator{\Spec}{Spec}
\title{Generalized graph codes and their minimum distances}
\author[N.~Fujii]{Naoki Fujii}
\thanks{Graduate School of Fundamental Science and Engineering, Waseda University}
\email{naoki.fujii@fuji.waseda.jp}
\date{2024/07/01}
\begin{document}

\begin{abstract}
    {\it Graph code} is a linear code obtained from linear codes \(C\) and a certain bipartite graph \(G\). In this paper, I propose an expansion of the definition of graph code to general \(\ell\)-partite, and give its lower bound of minimum distance. I also give an example of generalized graph code and calculate its parameters \([n,k,d]\). 
\end{abstract}

\maketitle

\section{Introduction}
For ordinary graph code is defined as follow\cite{Tanner, Hoholdt--Justesen}: Let \(G = (V, E)\) be a \(n\)-regular bipartite graph on \(V = V_1 \sqcup V_2\) with \(|V_1| = |V_2| = m\) and ordering edges \(e \in E\) in some way, \(C_1\) be a \([n, k_1, d_1]\)-linear code over \(\mathbb{F}_q\), and \(C_2\) be a \([n, k_2, d_2]\)-linear code over \(\mathbb{F}_q\). Considering that giving a symbol of \(\mathbb{F}_q\) for all edges arbitrarily. Then, the sequence of \(mn\) symbols indexed by edges with given order can be regarded as a kind of code word. Let \(c(v)\) be a induced codeword from \(c\) (the sequence above) by reading the symbols on the edges connecting to \(v\). If two sequences \(c_1, c_2\) satisfy following two condition:
\begin{enumerate}
    \item For all \(v_1 \in V_1\), \(c_1(v_1) \in C_1\) and \(c_2(v_1) \in C_1\),
    \item For all \(v_2 \in V_2\), \(c_1(v_2) \in C_2\) and \(c_2(v_2) \in C_2\),
\end{enumerate}
then \((c_1 + c_2)(v)\) is a codeword of \(C_1\) for all \(v \in V_1\), and is a codeword of \(C_2\) for all \(v \in V_2\).


\begin{df}[\cite{Hoholdt--Justesen}]
    We say the following linear code \(\C\) is a {\it graph code}.
    \begin{align}
        \C = \{c \in \mathbb{F}_q^{mn} \mid \text{\(c(v_1) \in C_1\) for every \(v_1 \in V_1\) and \(c(v_2) \in C_2\) for every \(v_2 \in V_2\)}\}
    \end{align}
\end{df}

The minimum distance of graph code \(\C\) is bounded with the second largest adjacent eigenvalue \(\lambda_2\) of foundataion graph \(G\) as
\begin{prop}[\cite{Hoholdt--Justesen}]\label{Hoholdt_Justesen_minimum}
    Let \(D\) be the minimum distance of \(\C\) with above condition. Then, 
    \begin{align}
        D \geq dm\frac{d-\lambda_2}{n-\lambda_2}. \label{eq1:bound2}
    \end{align}
\end{prop}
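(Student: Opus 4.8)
The plan is to take an arbitrary nonzero codeword \(c \in \C\) and bound its weight \(w := \wt{c}\) from below, since the minimum distance is the minimum such weight. Write \(S\) for the set of edges carrying a nonzero symbol, so \(w = |S|\), and set \(A = \{v_1 \in V_1 \mid c(v_1) \neq 0\}\) and \(B = \{v_2 \in V_2 \mid c(v_2) \neq 0\}\), with \(a = |A|\) and \(b = |B|\). First I would record the local structure forced by the code. Every edge of \(S\) joins a vertex of \(A\) to a vertex of \(B\), because a nonzero symbol on an edge makes the induced words at both endpoints nonzero. Moreover, for each \(v_1 \in A\) the word \(c(v_1)\) is a nonzero codeword of \(C_1\) and hence has weight at least \(d\); as these nonzero edges all run into \(B\), at least \(d\) of the edges at \(v_1\) lie in \(E(A,B)\). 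Summing over \(A\) (and symmetrically over \(B\)) gives \(w = \sum_{v_1 \in A} \wt{c(v_1)} \geq da\), likewise \(w \geq db\), and for the edge counts \(E(A,B) \geq da\) and \(E(A,B) \geq db\). Here I take \(d = \min(d_1, d_2)\) so a single symbol controls both sides.

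The second ingredient is a spectral estimate for the biregular bipartite graph \(G\). Let \(M\) be its \(m \times m\) biadjacency matrix; since \(G\) is \(n\)-regular on each side, the all-ones vector is a singular vector of \(M\) with singular value \(n\), and the next singular value coincides with the second largest adjacency eigenvalue \(\lambda_2\). Decomposing the indicator vectors \(\mathbbm{1}_A\) and \(\mathbbm{1}_B\) into their components along the all-ones direction and orthogonal to it and using \(E(A,B) = \mathbbm{1}_A^{\top} M \mathbbm{1}_B\), I would obtain the refined expander-mixing bound
\[
    E(A,B) \leq \frac{n\,ab}{m} + \lambda_2 \sqrt{\Bigl(a - \tfrac{a^2}{m}\Bigr)\Bigl(b - \tfrac{b^2}{m}\Bigr)}.
\]
Keeping the second-order factors \(a - a^2/m\) and \(b - b^2/m\), rather than discarding them in favour of \(ab\), is precisely what will yield the denominator \(n - \lambda_2\) instead of the weaker \(n\).

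To finish I would feed the two lower bounds into this inequality. Adding \(da \leq E(A,B)\) and \(db \leq E(A,B)\), then using \(a + b \geq 2\sqrt{ab}\) together with the elementary chain \(\sqrt{(1 - a/m)(1 - b/m)} \leq 1 - \tfrac{a + b}{2m} \leq 1 - \tfrac{\sqrt{ab}}{m}\), the square-root term simplifies and, after rearranging, I expect to reach \((d - \lambda_2)\sqrt{ab} \leq \tfrac{n - \lambda_2}{m}\,ab\), that is \(\sqrt{ab} \geq \tfrac{m(d - \lambda_2)}{n - \lambda_2}\). Since \(w \geq d\max(a,b) \geq d\sqrt{ab}\), this gives \(D = w \geq dm\,\tfrac{d - \lambda_2}{n - \lambda_2}\). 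The genuine difficulty is not the combinatorics but this last algebraic step: one must carry the refined mixing term through an inequality that is asymmetric in \(a\) and \(b\) and check that the AM-GM reduction to the symmetric case \(a = b\) loses nothing, since a crude bound collapses the sharp denominator \(n - \lambda_2\) back to \(n\).
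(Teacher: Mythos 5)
Your proof is correct and follows essentially the same spectral route as the paper's own argument (given there for the general \(\ell\)-partite case and specializing to this proposition at \(\ell = 2\)): the same local observation that each support vertex carries at least \(d\) nonzero edges, and the same projection away from the all-ones eigenvector to bring in \(\lambda_2\). The only real difference is bookkeeping --- the paper symmetrizes by padding the smaller support set up to the size \(a\) of the larger and evaluates a single quadratic form \(x^\top A x\) on the full adjacency matrix, whereas you keep \(a \neq b\), work with the biadjacency matrix via the refined mixing inequality, and symmetrize at the end with AM--GM; both routes land on the identical bound.
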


\Cref{Hoholdt_Justesen_minimum} is obtained by Sipser and Spielman\cite{Sipser--Spielman} with another form as a bound for expander codes, and the specific version for graph code is introduced in \cite{Hoholdt--Justesen} as the above inequality (\ref{eq1:bound2}).

In this paper, firstly, graph code will be generalized into \(\ell\)-partite graph, and the same type of  proposition on the minimum distance will be showed: 
\begin{thm}\label{main-theorem}
    Let \(\C\) be a generalized graph code on \(\ell\)-partite graph with \([(\ell - 1)n,k,d]\)-linear code \(C\). Then, \(D := d(\C)\) satisfies
    \begin{align}
        D \geq \frac{dm(d-\lambda_2)}{(\ell - 1)n - \lambda_2}.
    \end{align} 
\end{thm}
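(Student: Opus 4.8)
The plan is to lift the spectral argument behind \Cref{Hoholdt_Justesen_minimum} from the bipartite to the $\ell$-partite setting, working with the full adjacency matrix $A$ of $G$ rather than with a single biadjacency block. I will use the natural structure implicit in the statement: $\ell$ parts of size $m$, each vertex joined to $n$ vertices in each of the other parts, and the same local code $C=[(\ell-1)n,k,d]$ imposed at every vertex. Write $N=\ell m$ for the number of vertices and $\Delta=(\ell-1)n$ for the common degree, so that $A\mathbf 1=\Delta\mathbf 1$ and $\lambda_2$ is the second-largest eigenvalue of $A$. Fix a nonzero $c\in\C$, let $S\subseteq E$ be its support (the edges carrying a nonzero symbol) and put $s=|S|$; since $D$ is the least weight of a nonzero codeword it suffices to bound $s$ below for every such $c$. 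The two structural inputs are local and combinatorial: at each vertex $v$ the induced word $c(v)$ lies in $C$, so an \emph{active} vertex (one incident to some edge of $S$) carries at least $d$ nonzero symbols, and every edge of $S$ joins two active vertices.

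First I would record the consequences. Let $T$ be the set of active vertices and $t=|T|$. Double counting the endpoints of $S$ gives $2s=\sum_{v\in T}\deg_S(v)\ge d\,t$, hence $t\le 2s/d$. Since both endpoints of every support edge are active, $S\subseteq E(G[T])$, so $s\le e(T)$, where $e(T)$ is the number of edges induced on $T$.

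Next I would bound $e(T)$ spectrally. Writing $\mathbf 1_T=\tfrac{t}{N}\mathbf 1+\mathbf w$ with $\mathbf w\perp\mathbf 1$ and $\|\mathbf w\|^2=t(1-t/N)$, and using $\mathbf 1_T^\top A\,\mathbf 1_T=2e(T)$ together with the one-sided Rayleigh bound $\mathbf w^\top A\,\mathbf w\le\lambda_2\|\mathbf w\|^2$, I obtain the expander mixing estimate
\begin{align}
2e(T)\;\le\;\frac{\Delta t^2}{N}+\lambda_2\,t\Bigl(1-\frac{t}{N}\Bigr)\;=\;\frac{(\Delta-\lambda_2)t^2}{N}+\lambda_2 t .
\end{align}
Chaining with $s\le e(T)$ and substituting $t\le 2s/d$ into the (non-decreasing) right-hand side, then dividing by $2s$, collapses everything to $1\le \tfrac{2(\Delta-\lambda_2)s}{Nd^2}+\tfrac{\lambda_2}{d}$. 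Solving gives $s\ge \tfrac{Nd(d-\lambda_2)}{2(\Delta-\lambda_2)}$, and since $N=\ell m\ge 2m$ this is at least $\tfrac{md(d-\lambda_2)}{(\ell-1)n-\lambda_2}$, the asserted bound. Specialising to $\ell=2$ reproduces \cref{eq1:bound2} exactly, which is a useful consistency check.

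I expect the main obstacle to be the mixing estimate, in two respects. First, the clean denominator $(\ell-1)n-\lambda_2$ rather than merely $(\ell-1)n$ hinges on keeping the correction factor $1-t/N$ in $\|\mathbf w\|^2$; dropping it yields only the weaker denominator $\Delta$, so this refinement is doing the real work. Second, one must justify the one-sided bound $\mathbf w^\top A\,\mathbf w\le\lambda_2\|\mathbf w\|^2$ for a genuinely $\ell$-partite graph: such graphs carry large negative eigenvalues (for $\ell=2$ the spectrum is symmetric, $\lambda_{\min}=-\Delta$), so the two-sided mixing lemma with $\max(|\lambda_2|,|\lambda_N|)$ is useless here. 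The point to verify is that only the upper Rayleigh bound on $\mathbf 1^{\perp}$ is needed, and that it holds with $\lambda_2$ the second-largest eigenvalue irrespective of the negative part of the spectrum. Once this is in place, the remaining algebra is routine.
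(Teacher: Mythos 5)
Your argument is correct, and it takes a genuinely different route from the paper. The paper works part by part: it sets $S_i=\mathrm{Supp}(c)\cap V_i$, pads every $S_i$ up to the common size $a=\max_i|S_i|$, forms the block vector $x$ with entries $m-a$ on $S_i$ and $-a$ off it, lower-bounds $x^\top Ax$ by an explicit four-case count of edges between $S_i$, $\overline{S_i}$ and the other parts, compares with the Rayleigh bound $x^\top Ax\le\lambda_2|x|^2$ to get $a\ge m(d-\lambda_2)/((\ell-1)n-\lambda_2)$, and finishes with $D\ge da$, which counts only the support edges meeting $V_1$. You instead work with the global active set $T$, replace the four-case edge count by the one-sided expander mixing inequality applied to $\mathbf{1}_T$, and replace $D\ge da$ by the handshake identity $2s=\sum_{v\in T}\deg_S(v)\ge dt$, which counts every support edge twice rather than only those incident to the largest part. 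The key spectral input is identical --- the auxiliary lemma inside the paper's proof is exactly your one-sided Rayleigh bound on $\mathbf{1}^{\perp}$, so the concern you raise about the negative part of the spectrum is resolved the same way the paper resolves it. Your accounting is more efficient, though: you obtain $D\ge \ell md(d-\lambda_2)/\bigl(2((\ell-1)n-\lambda_2)\bigr)$, which is stronger than the stated bound by a factor of $\ell/2$ and is in fact attained by the $K_{3,3,3}$ example of Section 4.2 (your bound gives $3$ there, versus $2$ from the theorem). The one step you should spell out is the parenthetical monotonicity of $t\mapsto(\Delta-\lambda_2)t^2/N+\lambda_2t$: it requires $\lambda_2\ge0$, which holds because each part is an independent set of size $m\ge 2$, so eigenvalue interlacing gives $\lambda_2\ge 0$ (and for $m=1$ there is nothing to prove).
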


This paper is organized as follows: In Section \ref{Preliminaries}, I define the generalized graph code on \(\ell\)-partite graph and codes. I also review some needed properties of them. In Section \ref{proof-main}, I show the main Theorem \ref{main-theorem}. In section \ref{eg-code}, I introduce some example of generalized graph code.

\section{Preliminaries}\label{Preliminaries}

In this section, I introduce needed facts including the proposal of generalized graph code.

\subsection{Linear code}

Let \(\mathbb{F}_q\) be a finite field with order \(q\). \(\mathbb{F}_q^n\) is a linear space over \(\mathbb{F}_q\) with the addtion and the scalar multiplication induced from the operations of \(\mathbb{F}_q\). Consider a subspace \(C < \mathbb{F}_q^n\). This \(C\) is called a linear code over \(\mathbb{F}_q\). For \(x,y \in C\), Hamming distance \(d(x,y)\) is the number of cordinates in which the entries of \(x, y\) are different, i.e. \(d(x,y) := \#\{i \in \{1,2,\ldots, n\} \mid x_i \neq y_i\}\). Hamming distance between \(x\) and \(0\) (\( = d(x,0)\)) is called {\it weight} of \(x\), and denoted as \(\wt{x}\). The minimum distance of \(C\) is defined as \(\min_{x \neq y} d(x,y)\), where \(x,y \in C\) and denoted \(d(C)\). For the linearity of \(C\), the minimum distance is equal to the minimum weight of the elements in \(C\).

If a code \(C\) is a subspace of \(\mathbb{F}_q^n\), has dimension \(k = \dim C\), and the minimum distance \(d(C)\) is \(d\), then we refer \(C\) as a \([n,k,d]\)-linear code, or in this paper, I sometimes use the notation \(C = [n,k,d]\).

\subsection{\(\ell\)-partite and generalized graph code}

I give the definition of \(\ell\)-partite graph. Considering the definition of bipartite graph, it is natural to increase the number of components of vertex set \(V\) to generalize.
\begin{df}
    Let \(G = (V, E)\) be a graph. If the vertex set \(V\) has a from of disjoint union \(V = V_1 \sqcup \cdots \sqcup V_\ell\) such that
    \begin{align}
        \text{for all } u,v \in V_i,\ \{u,v\} \not\in E        
    \end{align}
    holds for each \(i = \{1,2,\ldots, \ell\}\), then we say that \(G\) is {\it \(\ell\)-partite}.
\end{df}

I propose {\it generalized graph code} as a expansion of graph code to \(\ell\)-partite graph: Let \(G = (V, E)\) be a \(\ell\)-partite graph with the vertex decomposition \(V = V_1 \sqcup \cdots \sqcup V_\ell\) and satisfy
\begin{enumerate}
    \item[(i)] for each component \(V_i\) of the decomposition, \(|V_1| = \cdots = |V_\ell| = m\), and
    \item[(ii)] for all \(i \in \{1,2, \ldots, \ell\}\) and \(v \in V_i\), \(n = |N(v) \cap V_j|\) is not depend on the choice of \(j \neq i\) and \(v\), where \(N(v) = \{u \in V \mid \{u,v\} \in E\}\), 
\end{enumerate}
and insert the total order (consider labeling with natural numbers) into the edge set \(E\) of \(G\) above.

\begin{rem}
    The condition (ii) above claims that each vertex \(v\) has precisely \(n\) edges to each component \(V_i\). Clearly, \(G\) is \((\ell - 1)n\)-regular.
\end{rem}

Prepare \(\ell\) codes \(C_1 = [(\ell - 1)n, k_1, d_1]\), \(C_2 = [ (\ell - 1)n, k_2, d_2]\), \(\ldots\), \(C_\ell = [(\ell - 1)n, k_\ell, d_\ell]\). Now, let us consider allocating the elements of \(\mathbb{F}_q\) (hereafter referred to as {\it \(\mathbb{F}_q\)-symbols} or simply {\it symbols}) to the edges of \(G\). 
Consider \(\ell\) conditions below:
\begin{enumerate}
    \item[(\(\ast1\))] For all \(u_1 \in V_1\), \(c(u_1)\) is a codeword of \(C_1\).
    \item[(\(\ast2\))] For all \(u_2 \in V_2\), \(c(u_2)\) is a codeword of \(C_2\).
    \item[\(\vdots\)]
    \item[(\(\ast \ell\))] For all \(u_\ell \in V_\ell\), \(c(u_\ell)\) is a codeword of \(C_\ell\). 
\end{enumerate}

The entire of allocations of \(\mathbb{F}_q\)-symbols (let us denote \(\C\)) satisfying conditions above forms a \([\ell(\ell-1) mn/2]\)-length linear code over \(\mathbb{F}_q\) (codewords are an arrangement of symbols on all edges arranged in the above order), then we call it {\it generalized graph code}. 
\begin{align}
    \C = \{c \in \mathbb{F}_q^{\ell(\ell-1) mn/2} \mid \text{\(c\) satisfies (\(\ast 1\))\(\sim\)(\(\ast \ell\))}\}
\end{align}

\section{Proof of Theorem \ref{main-theorem}} \label{proof-main}

Recall Theorem \ref{main-theorem} with the accurate description:
{\let\temp\thethm
\renewcommand{\thethm}{\ref{main-theorem}}
\begin{thm}[Recall]
    Let \(\C = [\ell(\ell - 1) mn/2 , K, D]\) be a generalized graph code on \(\ell\)-partite graph \(G = (V, E)\) (valency and vertices conditions are above) with prepared codes \(C_1 = C_2 = \cdots = C_\ell = [(\ell - 1)n,k,d]\). Then, 
    \begin{align}
        D \geq \frac{dm(d-\lambda_2)}{(\ell - 1)n - \lambda_2}
    \end{align}
    holds, where \(\lambda_2\) is the second largest adjacecy eigenvalues of \(G\).
\end{thm}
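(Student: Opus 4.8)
The plan is to mimic the spectral (expander-mixing) argument that underlies the original bound in \Cref{Hoholdt_Justesen_minimum}, adapting it to the \(\ell\)-partite setting. Fix a nonzero codeword \(c \in \C\) and let \(S \subseteq E\) be its support, so \(|S| = \wt{c} = D\) in the extremal case. The key objects are, for each component \(V_i\), the set of vertices \(v \in V_i\) incident to at least one edge of \(S\); call the union of these \(U\). Since \(c(v)\) lies in a code of minimum distance \(d\) and \(c(v) \neq 0\) whenever \(v\) touches \(S\), every such vertex is incident to at least \(d\) edges of \(S\). This gives the first inequality \(\sum_{v \in U} \deg_S(v) \geq d|U|\), while on the other hand \(\sum_{v \in U}\deg_S(v) = 2|S|\) because \(G\) is \((\ell-1)n\)-regular and each edge of \(S\) has both endpoints in \(U\).

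Next I would bring in the spectral gap. The standard tool is the expander mixing lemma for the \((\ell-1)n\)-regular graph \(G\): for \(U\) as above, the number of edges of \(G\) internal to \(U\) is controlled by
\begin{align}
    e(U) \leq \frac{(\ell-1)n}{2}\cdot\frac{|U|^2}{N} + \frac{\lambda_2}{2}\,|U|,
\end{align}
where \(N = \ell m\) is the total number of vertices. Since \(S \subseteq E(U)\), combining \(|S| \geq d|U|/2\) with this upper bound on \(e(U) \geq |S|\) lets me eliminate \(|S|\) and solve for \(|U|\), yielding a lower bound of the form \(|U| \geq \ell m\,(d-\lambda_2)/\bigl((\ell-1)n-\lambda_2\bigr)\). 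Feeding this back into \(D = |S| \geq d|U|/2\) and tracking the constants should produce the claimed bound \(D \geq dm(d-\lambda_2)/\bigl((\ell-1)n - \lambda_2\bigr)\); I would check carefully that the factor of \(2\) from double counting and the factor \(\ell\) from \(N = \ell m\) combine correctly against the asserted coefficient \(m\) rather than \(\ell m\).

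The main obstacle I anticipate is precisely this bookkeeping of constants, together with justifying the mixing lemma in a form that respects the \(\ell\)-partite structure. The expander mixing lemma is usually stated for a regular graph using its full adjacency spectrum, but here the relevant eigenvalue is \(\lambda_2\), the second largest, and the \(\ell\)-partite graph has a block structure that may force several eigenvalues to share the extremal role; I would need to confirm that \(\lambda_2\) genuinely controls the quadratic form \(\langle \mathbbm{1}_U, A\,\mathbbm{1}_U\rangle - \tfrac{(\ell-1)n}{N}|U|^2\) after projecting off the all-ones eigenvector. A cleaner route, which I would prefer if the block structure complicates the mixing lemma, is to argue directly: decompose the indicator \(\mathbbm{1}_U\) as \(\tfrac{|U|}{N}\mathbbm{1} + w\) with \(w \perp \mathbbm{1}\), bound \(\langle \mathbbm{1}_U, A\,\mathbbm{1}_U\rangle \leq \tfrac{(\ell-1)n}{N}|U|^2 + \lambda_2 \|w\|^2\), and use \(\|w\|^2 \leq |U|\). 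This reduces everything to the single spectral inequality and avoids invoking the mixing lemma as a black box, making the constant-tracking transparent.
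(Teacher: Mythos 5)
Your route is essentially correct and is genuinely different from the paper's, so a comparison is worthwhile. The paper pads each vertex-support set \(S_i = \mathrm{Supp}(c)\cap V_i\) up to a common size \(a\), forms the signed vector with entries \(m-a\) on the \(S_i\) and \(-a\) elsewhere, lower-bounds \(x^\top A x\) by an explicit four-case count of edges between the \(S_i\) and their complements, and concludes \(a \geq m(d-\lambda_2)/((\ell-1)n-\lambda_2)\), finishing with \(D \geq da\). You instead work with the unpadded set \(U\) of all support vertices and the indicator vector \(\mathbbm{1}_U\): the double count \(2|S|=\sum_{v\in U}\deg_S(v)\geq d|U|\) together with \(|S|\leq e(U)\) and the spectral bound on \(e(U)\) gives \(|U| \geq \ell m(d-\lambda_2)/((\ell-1)n-\lambda_2)\) and hence \(D \geq \tfrac{d}{2}|U| \geq \tfrac{\ell}{2}\cdot dm(d-\lambda_2)/((\ell-1)n-\lambda_2)\), which dominates the claimed bound for every \(\ell\geq 2\) and is strictly stronger for \(\ell\geq 3\). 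Your version also sidesteps the padding step, which is the delicate point of the paper's argument (after padding, the sets \(S_i\) for \(i\geq 2\) contain vertices with no support edges, so the ``by symmetry'' appeal there needs care). One concrete fix is required in your sketch: the crude estimate \(\|w\|^2\leq |U|\) is \emph{not} sufficient, since it yields only \(|U|\geq \ell m(d-\lambda_2)/((\ell-1)n)\) and hence \(D\geq \ell dm(d-\lambda_2)/(2(\ell-1)n)\), which for \(\ell=2\) (and possibly small \(\ell\) with large \(\lambda_2>0\)) fails to imply the stated denominator \((\ell-1)n-\lambda_2\). You must use the exact value \(\|w\|^2=|U|\bigl(1-|U|/(\ell m)\bigr)\), so that
\begin{align}
    d|U| \;\leq\; 2e(U) \;\leq\; \frac{(\ell-1)n-\lambda_2}{\ell m}\,|U|^2 + \lambda_2|U|,
\end{align}
from which the denominator \((\ell-1)n-\lambda_2\) emerges correctly. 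Finally, your worry about the block structure is unfounded as long as \(G\) is connected so that \(\mathbbm{1}\) spans the top eigenspace; this is exactly the same implicit hypothesis under which the paper's own inequality \(x^\top Ax\leq\lambda_2|x|^2\) is valid.
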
}

\begin{proof}[Proof of Theorem \ref{main-theorem}]

    Let \(c \in \C\) be a codeword that reach the minimum distance. For \(i \in \{1, 2, \ldots, \ell\}\), take
    \begin{align}
        S_i := {\rm Supp}(c) \cap V_i.
    \end{align}
    For the symmetricity of \(G\), we can assume \(a = |S_1| \geq |S_2| \geq \cdots \geq |S_\ell|\). If \(|S_i| < a\), choose vertices from \(V_i \backslash S_i\) and put them into \(S_i\) to make \(|S_i| = a\). We use the notation \(\overline{S_i}\) as \(V_i \backslash S_i\). Let \(x_i\) be a vector indexed by \(V_i\), with \(m-a\) at the coordinate corresponding to \(S_i\) and \(-a\) at other coordinates, and \(A\) be the adjacecy matrix of \(G\). By denoting the adjacency relationship from \(V_i\) to \(V_j\) as \(A_{i,j}\), \(A\) can be written as follow:
    \begin{align}
        A = \left[
            \begin{array}{ccccc}
                O & A_{1,2} & A_{1,3} & \cdots & A_{1,\ell}\\
                A_{2,1} & O & A_{2,3} & \cdots & A_{2, \ell}\\
                A_{3,1} & A_{3,2} & O & \cdots & A_{3, \ell}\\
                \vdots & \vdots & \vdots & \ddots & \vdots\\
                A_{\ell, 1} & A_{\ell, 2} & A_{\ell, 3} & \cdots & O
            \end{array}
        \right].
    \end{align}
    
    To prove the main theorem, consider \(x^\top A x\), where \(x\) be the column vector that is obtained by arranging \(x_i\). For the representation of \(A\) above, we have
    \begin{align}
        x^\top A x = \sum_{i \in \{1, \ldots, \ell\}} \sum_{j:j \neq i} x_i^\top A_{i,j} x_j.
    \end{align}

    Here, note that \(x_i^\top A_{i,j} x_j\) is the sum of the products of two values of \(x\) at corresponding coordinates to the endpoints of edges between \(V_i\) and \(V_j\).

    Now, calculate \(\sum_{j} x_i^\top A_{i,j} x_j\) with fixed \(i\). To simplify the discussion, assume that \(i = 1\). In the following discussion, I use the notation \(E(U, T)\) as the edge set between \(U\) and \(T\).

    \begin{enumerate}
        \item[(i)] Consider the edges between \(S_1\) and \(S_2 \cup \cdots \cup S_\ell\). For arbitrarily chosen \(u \in S_1\), \(u\) has at least \(d\) edges to \(S_2 \cup \cdots \cup S_\ell\) because the number of these edges should be larger than the miniimum distance of \(C\). Therefore, \(| E(\{u\}, S_2 \cup \cdots \cup S_\ell) | \geq d\), and we have
        \begin{align}
            | E(S_1, S_2 \cup \cdots \cup S_\ell) | = \sum_{u \in S_1} | E(\{u\}, S_2 \cup \cdots \cup S_\ell) | \geq \sum_{u \in S_1} d = da.
        \end{align}

        \vspace{10 pt}

        \item[(ii)] Consider the edges between \(S_1\) and \(\overline{S_2} \cup \cdots \cup \overline{S_\ell}\). For arbitrarily chosen \(u \in S_1\), \(u\) has at most \((\ell-1)n-d\) edges to \(\overline{S_2} \cup \cdots \cup \overline{S_\ell}\) (\(u\) has \((\ell-1)n\) edges to \(S_2 \cup \overline{S_2} \cup \cdots \cup S_\ell \cup \overline{S_\ell}\)). Therefore, we have
        \begin{align}
            | E(S_1, \overline{S_2} \cup \cdots \cup \overline{S_\ell}) | = \sum_{u \in S_1} | E(\{u\}, \overline{S_2} \cup \cdots \cup \overline{S_\ell}) | &\leq \sum_{u \in S_1} ((\ell-1)n-d)\\
            &= (\ell - 1)an - da. \nonumber
        \end{align}
        
        \vspace{10 pt}

        \item[(iii)] Consider the edges between \(\overline{S_1}\) and \(S_2 \cup \cdots \cup S_\ell\). Firstly, count the number of edges between \(V_1\) and \(S_2 \cup \cdots \cup S_\ell\). Take a vertex \(u \in S_2 \cup \cdots \cup S_\ell\). For regularity of \(G\), \(u\) exactly has \(n\) edges to \(V_1\). Then, 
        \begin{align}
            | E(V_1, S_2 \cup \cdots \cup S_\ell) | = \sum_{u \in S_2 \cup \cdots \cup S_\ell} \deg(u) = (\ell - 1)an.
        \end{align}
        Here, \(|E(V_1, S_2 \cup \cdots \cup S_\ell)| = |E(S_1, S_2 \cup \cdots \cup S_\ell)| + |E(\overline{S_1}, S_2 \cup \cdots \cup S_\ell)|\), then from (i), we have
        \begin{align}
            |E(\overline{S_1}, S_2 \cup \cdots \cup S_\ell)| &= |E(V_1, S_2 \cup \cdots \cup S_\ell)| - |E(S_1, S_2 \cup \cdots \cup S_\ell)|\\
            & \leq (\ell - 1)an - da. \nonumber
        \end{align}
        

        \vspace{10 pt}

        \item[(iv)] Consider the edges between \(\overline{S_1}\) and \(\overline{S_2} \cup \cdots \cup \overline{S_\ell}\). For arbitrarily chosen \(u \in \overline{S_1}\), \(u\) has \((\ell-1)n\) edges to \(V_2 \cup \cdots \cup V_\ell\). Therefore, 
        \begin{align}
            | E(\overline{S_1}, \overline{S_2} \cup \cdots \cup \overline{S_\ell}) | &= | E(\overline{S_1}, V_2 \cup \cdots \cup V_\ell) | - | E(\overline{S_1}, S_2 \cup \cdots \cup S_\ell) | \\
            &= \sum_{u \in \overline{S_1}} (\ell-1)n - \sum_{u \in \overline{S_1}} | E(\{u\}, S_2 \cup \cdots \cup S_\ell) | \nonumber\\
            &\geq (\ell - 1)(m-a)n - ((\ell - 1)an - da) \nonumber\\
            &= (\ell - 1)mn - 2(\ell - 1)an + da. \nonumber
        \end{align} 

    \end{enumerate}

    \vspace{10 pt}
    For these bounds of the number of edges, we have
    \begin{align}
        \sum_{j = 2}^\ell &x_1^\top A_{1, j} x_j\\
        &\geq da \cdot (m-a)^2 + \left[(\ell - 1)an - da + (\ell - 1)an - da\right] \cdot (-a)(m-a)\nonumber\\
        & \hspace{70 pt} + ((\ell - 1)mn - 2(\ell - 1)an + da) \cdot (-a)^2 \nonumber\\
        &= m^2da-(\ell-1)mna^2 \nonumber
    \end{align}

    From the symmetricity of \(G\), we have the same result with \(i = 2, 3, \ldots, \ell\). Therefore, 
        \begin{align}
            x^\top A x \geq \ell(m^2da-(\ell-1)mna^2) \label{eq1}
        \end{align}
    holds.

    To complete the proof, the following lemma is useful.
    \begin{lem}
        Let \(P\) be a \(i \times i\) real symmetric matrix with an eigenvector \(x\) corresponding to the largest eigenvalue of \(P\), and the second largest eigenvalue \(\theta_2\). For any \(y \in \mathbb{R}^i\) orthogonal to \(x\), 
        \begin{align}
            y^\top Py \leq \theta_2|y|^2
        \end{align}
        holds.
    \end{lem}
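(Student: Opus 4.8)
The plan is to reduce the quadratic form $y^\top P y$ to a weighted sum of squares by diagonalizing $P$ in an orthonormal eigenbasis, and then to bound each weight by $\theta_2$. Since $P$ is real symmetric, the spectral theorem furnishes an orthonormal basis $v_1, v_2, \ldots, v_i$ of $\mathbb{R}^i$ consisting of eigenvectors of $P$, with real eigenvalues that I order decreasingly as $\theta_1 \geq \theta_2 \geq \cdots \geq \theta_i$ (counting multiplicity). First I would arrange this basis so that $v_1 = x/|x|$; this is legitimate because $x$ is by hypothesis an eigenvector for the largest eigenvalue $\theta_1$, and the eigenspaces of a symmetric matrix are mutually orthogonal, so $x$ can be normalized and completed to such a basis.

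Next I would expand the given vector $y$ in this basis as $y = \sum_{j=1}^{i} c_j v_j$ with $c_j = \langle y, v_j \rangle$, so that $|y|^2 = \sum_{j=1}^{i} c_j^2$ by orthonormality. The orthogonality hypothesis $y \perp x$ gives $c_1 = \langle y, v_1 \rangle = 0$, which is exactly the step that removes the largest eigenvalue $\theta_1$ from the sum. Using $P v_j = \theta_j v_j$ together with orthonormality, I would compute $y^\top P y = \sum_{j=1}^{i} \theta_j c_j^2 = \sum_{j=2}^{i} \theta_j c_j^2$, and then apply $\theta_j \leq \theta_2$ for every $j \geq 2$ to conclude $y^\top P y \leq \theta_2 \sum_{j=2}^{i} c_j^2 \leq \theta_2 |y|^2$.

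The only delicate point, and the one I would treat carefully, is the behaviour when the top eigenvalue is degenerate. If $\theta_1$ has multiplicity greater than one, then $\theta_2 = \theta_1$, and a vector $y$ orthogonal to the single eigenvector $x$ may still carry nonzero components in the remainder of the $\theta_1$-eigenspace; the argument above nonetheless goes through, because those components are weighted by $\theta_1 = \theta_2$, so the stated bound is not violated. Conversely, when $\theta_1$ is simple---which is the case relevant to the application, since the $(\ell-1)n$-regular graph $G$ is connected and hence has a simple top eigenvalue---the eigenvector $x$ spans the entire $\theta_1$-eigenspace, so $y \perp x$ forces $y$ into the span of eigenvectors with eigenvalue at most $\theta_2$. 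I expect the bookkeeping around multiplicities, and the precise reading of the phrase ``second largest eigenvalue,'' to be the main thing to state cleanly, since the computation itself is routine once the eigenbasis is fixed.
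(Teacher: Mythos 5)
Your proposal is correct and follows essentially the same route as the paper's own proof: diagonalize \(P\) via the spectral theorem, use \(y \perp x\) to eliminate the top-eigenvalue component, and bound the remaining eigenvalues by \(\theta_2\). Your extra remark on the case of a degenerate largest eigenvalue is a careful refinement the paper omits, but it does not alter the argument.
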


    \begin{proof}
        Take \(\Spec(P)\), the spectrum of \(P\), and corresponding eigenvectors \(x_j\) of \(\theta_j \in \Spec(P)\). Note that \(x = x_1\). Here, we take \(|x_j| = 1\) for any \(j = 1, \ldots, i\). Since \(y \perp x = x_1\), the vector \(y\) can be expanded as \(y = a_2 x_2 + a_3 x_3 + \cdots + a_ix_i\) for some \(a_2, \ldots, a_i \in \mathbb{R}\). Thus, we have
        \begin{align}
            y^\top P y = \sum_{j = 2}^i \theta_j a_j^2 \leq \theta_2 \sum_{j = 2}^i a_j^2 = \theta_2 |y|^2 
        \end{align}
    \end{proof}

    Since \(G\) is regular, the largest adjacent eigenvector is all-\(1\) vector \(\mathbbm{1}\). The vector \(x\) in this proof is orthogonal to the corresponding eigenvector to the largest eigenvalue of \(A\). Thus, we have
    \begin{align}
        x^\top A x \leq \lambda_2 |x|^2 = \lambda_2 \ell am(m-a), \label{eq2}
    \end{align}
    where \(\lambda_2\) is the second largest eigenvalue of \(A\). 

    From (\ref{eq1}) and (\ref{eq2}), we finally have
    \begin{align}
        a \geq \frac{m(d-\lambda_2)}{(\ell - 1)n - \lambda_2}.
    \end{align}

    Now, from the construction of \(\C\), \(D \geq da\) holds. Therefore, we have
    \begin{align}
        D \geq \frac{dm(d-\lambda_2)}{(\ell - 1)n - \lambda_2},
    \end{align}
    which is desired.
\end{proof}

\section{Examples of Generalized Graph Code} \label{eg-code}

\subsection{A \(3\)-partite graph code on \(K_{7,7,7}\)} 
Let \(G\) be a \(3\)-partite graph with the vertex set \(V = V_1 \cup V_2 \cup V_3\) where 
\begin{align}
    V_1 &= \{v_1, v_2, v_3, v_4, v_5, v_6, v_7\}\\
    V_2 &= \{v_8, v_9, v_{10}, v_{11}, v_{12}, v_{13}, v_{14}\}\\
    V_3 &= \{v_{15}, v_{16}, v_{17}, v_{18}, v_{19}, v_{20}, v_{21}\}
\end{align}
and all two distinct vertices are adjacent. Then, \(G\) is complete \(3\)-partite \(K_{7,7,7}\). Define an order \(\succ_V\) on \(V\) such that \(v_i \succ_V v_j\) if \(i < j\). Then, consider the order \(\succ\) on the edges set \(E\) that is induced by \(\succ_V\) following: for any two edges \(e_1 = \{u,v\}\) and \(e_2 \{w,x\}\), \(e_1 \succ e_2\) if
\begin{enumerate}
    \item \(\max\{u,v\} \succ_V \max\{w,x\}\),
    \item[] or
    \item \(\max\{u,v\} = \max\{w,x\}\) and \(\min\{u,v\} \succ_V \min\{w,x\}\).
\end{enumerate}

Take binary \([7,4,3]\)-Hamming code \(H\) (fot the fundamental properties, see for example \cite{Fundamentals}), and set
\begin{align}
    C = \{[1,0] \otimes c_1 + [0,1] \otimes c_2 \mid c_1, c_2 \in H\},
\end{align}
where ``\(\otimes\)'' is Kronecker product. This \(C\) can be assumed as a \(14\)-length linear code obtained by placing codewords of \(H\) side by side.

In this section, we consider the graph code on \(G = K_{7,7,7}\) with based code \(C = [H, H]\).

\subsubsection{Dimension of \(C\)}
Let \(e_1, \ldots, e_4\) be basis of \([7,4,3]\)-Hamming code \(H\), \(C\) can be spanned by \([1,0] \otimes e_1, \ldots, [1,0] \otimes e_4\) and \([0,1] \otimes e_1, \ldots, [0,1] \otimes e_4\). Then, the dimension of \(C\) is \(8\). 
\subsubsection{Minimum distance of \(C\)} Let \(c_0 \in H\) be a codeword of \(H\) that gives the minimum distance of \(H\). Then, \(\wt{c_0} = 3\). \([1,0] \otimes c_0\) is a codeword of \(C\) with weight \(3\), thus, the minimum distance of \(C\) is \(3\).

Next, consider \(d(\C)\). The simple and straightforward way is determining the parameter \([n,k,d]\) of \(\C\). From the construction, \(n = 147\) is easy to see. So, let calculate rest \(k,d\). 

\subsubsection{Dimension of generalized graph code \(\C\)}
Let us denote the element on \(v_i\) as \(s_i\). Consider three \(7 \times 7\) matirces \(M_{12}\), \(M_{13}\), \(M_{23}\) below:
\begin{align}
    M_{12} &= (s_is_j)_{\substack{1 \leq i \leq 7 \\ 8 \leq j \leq 14}}\\
    M_{13} &= (s_is_j)_{\substack{1 \leq i \leq 7 \\ 15 \leq j \leq 21}}\\
    M_{23} &= (s_is_j)_{\substack{8 \leq i \leq 14 \\ 15 \leq j \leq 21}}
\end{align}
These matirces are kinds of a ``list'' of symbols on \(E\). Therefore, if we read the entries of \(M_{12}\), \(M_{13}\), and \(M_{23}\) in order \(\succ\), this is a codeword of \(\C\). Then, \(c \in \C\) can be considered as 
\begin{align}
    c = [1,0,0] \otimes M_{12} + [0,1,0] \otimes M_{13} + [0,0,1] \otimes M_{23}.
\end{align}
Suppose that the allocation of elements on each \(V_1\), \(V_2\), and \(V_3\) form the base vectors of \(H\). Then, denote them as \(e_{V_1}, e_{V_2}, e_{V_3}\), each \(M_{ij}\) gained from \(e_{V_1}, e_{V_2}, e_{V_3}\) form a basis of \(\C\) such that
\begin{align}
    \left\{
        [1,0,0] \otimes M_{12}, [0,1,0] \otimes M_{13}, [0,0,1] \otimes M_{23} 
    \right\}
\end{align}
Since each matrix \(M\) has \(4 \times 4 = 16\) (the choice of \(e_1, \ldots, e_4\)) types, then the dimension of \(\C\) is \(48\).

\subsubsection{Minimum distance of generalized graph code \(\C\)}
Suppose that the each \(7\)-length codewords on \(V_1\) and \(V_2\) give the minimum distance of \(H\), and all symbols on \(V_3\) is \(0\). Then, the graph codeword gained by this allocation gives the minimum distance of \(\C\). Here, \(M_{13}\) and \(M_{23}\) are all-\(0\) matrices, and \(M_{12}\) has precisely \(9\) entries that is non-zero. Thus, the minimum distance of \(\C\) is \(9\). This satisfies Theorem \ref{main-theorem}:
\begin{align}
    9 \geq \frac{3 \cdot 7 \cdot (3 - 0)}{(3-1) \cdot 7 - 0} = \frac{9}{2}
\end{align}

From the above discussion, we gain a \([147, 48, 9]\)-linear code on \(K_{7,7,7}\) with \(C = [1,0] \otimes H + [0,1] \otimes H\).

\subsection{A \(3\)-partite graph code on \(K_{3,3,3}\) with a ``even weight code''}

Consider the complete tri-partite \(K_{3,3,3}\) with vertices partition \(V = V_1 \sqcup V_2 \sqcup V_3\). On this graph (hereafter, denoted as \(G\)), construct a graph code with a \([6, 5, 2]\)-linear code
\begin{align}
    C = \{c \in \mathbb{F}_2^6 \mid \text{\(\wt{c}\) is even.}\}
\end{align}
Let \(\C\) be an obtained graph code. From the definition, since \(\wt{c(v)}\) is not depend on the coordinates order, if \(c \in \C\) is a graph codeword with a edge-order, \(c\) alse is a graph codeword with another order. Therefore, we does not have to consider the edge order. 

Consider \(9 \times 9\) matrix \(M\) (see the matrix \(M\) below) indexed by \(V(G)\) corresponding symbol allocation, that is the matrix which \((v_i, v_j)\)-entry is the symbol on \(\{v_i, v_j\} \in E(G)\) (and for ``diagonal'' parts corresponding to partition components, we set all entries are \(0\)).

    \begin{align}
        M = \begin{blockarray}{cccccccccccc}
             & & v_1 & v_2 & v_3 & v_4 & v_5 & v_6 & v_7 & v_8 & v_9 \\
            \begin{block}{c[cccc|ccc|cccc]}
                v_1 & & 0 & 0 & 0 & 1 & 0 & 0 & 0 & 0 & 1 & \\
                v_2 & & 0 & 0 & 0 & 0 & 1 & 1 & 1 & 0 & 1 & \\
                v_3 & & 0 & 0 & 0 & 0 & 1 & 0 & 1 & 1 & 1 & \\ \cline{3-11}
                v_4 & & 1 & 0 & 0 & 0 & 0 & 0 & 1 & 1 & 1 & \\
                v_5 & & 0 & 1 & 1 & 0 & 0 & 0 & 0 & 1 & 1 & \\
                v_6 & & 0 & 1 & 0 & 0 & 0 & 0 & 1 & 1 & 1 & \\ \cline{3-11}
                v_7 & & 0 & 1 & 1 & 1 & 0 & 1 & 0 & 0 & 0 & \\
                v_8 & & 0 & 0 & 1 & 1 & 1 & 1 & 0 & 0 & 0 & \\
                v_9 & & 1 & 1 & 1 & 1 & 1 & 1 & 0 & 0 & 0 & \\
            \end{block}
        \end{blockarray}
    \end{align}
\begin{center}
    (A matrix \(M\) corresponding to a proper symbols allocation. )
\end{center}

\vspace{20 pt}

Now, let us consider the minimum distance of \(\C\). The sum of entries of \(M\) is
\begin{align}
    \sum_{1 \leq i,j \leq 9} M(v_i, v_j) &= \sum_{v_i \sim v_j} M(v_i, v_j)\\
    &= 2\sum_{\{v_i, v_j\} \in E} c_{\{v_i, v_j\}}\\
    &= 2\wt{c}
\end{align}
Since each \(v_i\)-th row sum is coincide \(\wt{c(v_i)}\), we have
\begin{align}
    \sum_{1 \leq i,j \leq 9} M(v_i, v_j) = \sum_{1 \leq i \leq 9} \wt{c(v_i)}.
\end{align}
So, we should think about the minimum value of the weight sum of \(c(v_i)\).
\begin{enumerate}
    \item If only two edges \(\{v_1, v_4\}\) and \(\{v_1, v_5\}\) have symbol \(1\), and others have symbol \(0\), we have \(\wt{c(v_4)} = 1\). That does not satisfy the definition of \(\C\). In the case of not symbol \(1\) on \(\{v_1, v_5\}\) but on \(\{v_1, v_7\}\), the same violation occurs.
    \item If only three edges \(\{v_1, v_4\}\), \(\{v_4, v_7\}\), and \(\{v_1, v_7\}\) have symbol \(1\), then
    \begin{enumerate}
        \item \(\wt{c(v_1)} = \wt{c(v_4)} = \wt{c(v_7)} = 2\).
        \item Other \(c(v)\) have weight \(0\).
    \end{enumerate}
    So, this situation is suitable. Then, \(\wt{c} = (2 + 2 + 2)/2 = 3\).
\end{enumerate}
For similarity of \(K_{3,3,3}\), we have the minimum distance of \(\C\) is \(d(\C) = 3\). Since \(\lambda_2(K_{3,3,3}) = 0\), that fits
\begin{align}
    3 \geq \frac{2 \cdot 3 \cdot (2 - 0)}{(3 - 1) \cdot 3 - 0} = 2.
\end{align}

\section{Final remark}

I solved the bound for minimum distance of generalized graph code on \(\ell\)-partite graphs theoretically. However, I have not obtained an example that reach the lower bound. So, I am interested in

\begin{prob}
    Is there an example of a generalized graph code that reach the lower bound for \(\ell \geq 3\)? If not, is the bound reached asymptotically? 
\end{prob}

\begin{prob}
    Can the bound in \cref{main-theorem} be more strictly?  
\end{prob}

\end{document}